\newtheorem{dfn}{Definition}
\newtheorem{teo}{Theorem}
\newtheorem{cor}[teo]{Corollary}
\newcommand{\espan}{\operatorname{span}}
\DeclareMathOperator*{\esup}{ess\,sup}
\DeclareMathOperator*{\einf}{ess\,inf}
\title{{\bf Average sampling in certain subspaces of Hilbert-Schmidt operators on $L^2(\mathbb{R}^d)$}}
\author{
{\bf Antonio G. Garc\'{\i}a}\thanks{E-mail:\texttt{agarcia@math.uc3m.es}}
}
\date{}
\begin{document}
\maketitle
\begin{itemize}
\item[] Departamento de Matem\'aticas, Universidad Carlos III de Madrid, Spain.
\end{itemize}
%%%%%%%%%%%%%%%%%%%%
\begin{abstract}
The concept of translation of an operator allows to consider the  analogous of  shift-invariant subspaces in the class of Hilbert-Schmidt operators. Thus, 
we extend the concept of average sampling to this new setting, and we obtain the corresponding sampling formulas. The key point here is the use of the Weyl transform, a unitary mapping between the space of square integrable functions in the phase space $\mathbb{R}^d\times \widehat{\mathbb{R}}^d$ and the Hilbert space of Hilbert-Schmidt operators on $L^2(\mathbb{R}^d)$, which permits to take advantage of  some well established sampling results.   
\end{abstract}
%%%%%%%%%%%%%%%%%%%%%%%%%%%%%%%%%%%%%%%%%%%%%%%%%%%%%%%%%%%%%%%%%%%%%%
{\bf Keywords}: Hilbert-Schmidt operators; Weyl transform; Translation of operators; Average sampling.

\noindent{\bf AMS}: 42C15; 43A32; 47B10; 94A20.
%%%%%%%%%%%%%%%%%%
\section{Statement of the problem}
\label{section1}
%%%%%%%%%%%%%%%%%%
In this paper a generalized average sampling theory is established for a shift-invariant-like subspace of the class $\mathcal{H}\mathcal{S}(\mathbb{R}^d)$ of Hilbert-Schmidt operators on $L^2(\mathbb{R}^d)$ obtained by translations in a lattice of a fixed  {\em Hilbert-Schmidt operator} $S$. To be more precise, by using conjugation with the {\em time-frequency shift} $\pi(z)$, where $z=(x, \omega)$ belongs to the {\em phase space} $\mathbb{R}^d\times \widehat{\mathbb{R}}^d$, one defines the translation of $S$ by $z\in \mathbb{R}^d\times \widehat{\mathbb{R}}^d$ by 
$\alpha_z(S)=\pi (z) S \pi(z)^*$; remind that $\pi(z)f(t)={\rm e}^{2\pi i \omega \cdot t} f(t-x)$  for $f\in L^2(\mathbb{R}^d)$. If we take a {\em full rank  lattice} $\Lambda$ in $\mathbb{R}^d\times \widehat{\mathbb{R}}^d$ such that the sequence $\{\alpha_\lambda (S)\}_{\lambda \in \Lambda}$ is a {\em Riesz sequence} in $\mathcal{H}\mathcal{S}(\mathbb{R}^d)$ we consider the subspace, analogous of a shift-invariant subspace in $L^2(\mathbb{R}^d)$, given by
\[
V_S^2:=\big\{\sum_{\lambda \in \Lambda} c(\lambda)\, \alpha_\lambda(S)\,\,: \,\, \{c(\lambda)\}_{\lambda \in \Lambda}\in \ell^2(\Lambda)\big\}\,.
\]
Defining the {\em average samples} of any $T\in V_S^2$ at the lattice $\Lambda$ by 
\[
\big\langle T, \alpha_\lambda(Q)\big \rangle_{_{\mathcal{H}\mathcal{S}}}\,,\quad \lambda \in \Lambda\,,
\]
where $Q$ is a fixed element in $\mathcal{H}\mathcal{S}(\mathbb{R}^d)$, not necessarily in $V_S^2$, the aim is to obtain a sampling formula in $V_S^2$ having the form
\[
T=\sum_{\lambda \in \Lambda}\langle T, \alpha_\lambda(Q) \rangle_{_{\mathcal{H}\mathcal{S}}}\, \alpha_\lambda(H) \quad \text{in $\mathcal{H}\mathcal{S}$-norm}\,,
\]
for each $T\in V_S^2$, where the operator $H$ belongs to $V_S^2$ and satisfies that the sequence $\{\alpha_\lambda (H)\}_{\lambda \in \Lambda}$ is a {\em Riesz basis} for $V_S^2$.

\medskip

Thus, we are dealing with a generalization of the usual average sampling in a shift-invariant subspace $V_\varphi^2=\big\{\sum_{\alpha \in \mathbb{Z}^d} c_\alpha\, \varphi(t-\alpha)\,\,:\,\, \{c_\alpha\}\in \ell^2(\mathbb{Z}^d) \big\}$ of $L^2(\mathbb{R}^d)$ generated by the function $\varphi \in L^2(\mathbb{R}^d)$. In this case, for any $f\in V_\varphi^2$ the average samples are defined by 
$\langle f, \psi(\cdot-\alpha)\rangle$, $\alpha \in \mathbb{Z}^d$, where $\psi$ is an {\em average function} in $L^2(\mathbb{R}^d)$, not necessarily in $V_\varphi^2$ (see, for instance, Refs.~\cite{aldroubi:05,garcia:06}). This case was generalized in Ref.~\cite{hector:14} by considering another unitary representation $\{U(t)\}_{t\in \mathbb{R}}$ of $\mathbb{R}$ on $L^2(\mathbb{R})$ instead of the classical one given by the translations $\{T_t\}_{t\in \mathbb{R}}$. In fact, due to the properties of the translation operator $\alpha_z$,  one can consider that $\big\{\alpha_z\big\}_{z\in \mathbb{R}^d\times \widehat{\mathbb{R}}^d}$ is a {\em unitary representation} of the group $\mathbb{R}^d\times \widehat{\mathbb{R}}^d$ on the Hilbert space $\mathcal{H}\mathcal{S}(\mathbb{R}^d)$ of Hilbert-Schmidt operators on $L^2(\mathbb{R}^d)$ (see, for instance, Ref.~\cite{werner:84}).

\medskip  

As in the classical case (see, for instance, Refs.~\cite{aldroubi:05,garcia:08}), the average sampling theory is enriched by considering the multiple generators setting. Here, we consider the subspace of $\mathcal{H}\mathcal{S}(\mathbb{R}^d)$
\[
V_{\bf S}^2=\Big\{\sum_{n=1}^N\sum_{\lambda \in \Lambda} c_n(\lambda)\, \alpha_\lambda(S_n)\,\, :\,\, \{c_n(\lambda)\}_{\lambda \in \Lambda}\in \ell^2(\Lambda)\,,\, n=1, 2, \dots, N \Big\}\,, 
\]
where ${\bf S}=\{S_1, S_2, \dots, S_N\} \subset \mathcal{H}\mathcal{S}(\mathbb{R}^d)$ is the fixed set of generators. The multiple generators case allows to introduce a suitable {\em oversampling} by considering a set of samples $\big\langle T, \alpha_\lambda(Q_m) \big\rangle_{_{\mathcal{H}\mathcal{S}}}$, $\lambda \in \Lambda$, defined from $M\geq N$ fixed operators $Q_m$, $m=1,2 \dots,M$, not necessarily in $V_{\bf S}^2$. In this case the aim is to obtain a sampling formula  having the form
\[
T=\sum_{m=1}^M\sum_{\lambda \in \Lambda}\langle T, \alpha_\lambda(Q_m) \rangle_{_{\mathcal{H}\mathcal{S}}}\, \alpha_\lambda(H_m) \quad \text{in $\mathcal{H}\mathcal{S}$-norm}\,,
\]
for each $T\in V_{\bf S}^2$, where the operators $H_m$, $m=1,2,\dots,M$, belong to $V_{\bf S}^2$, and satisfy that the sequence $\{\alpha_\lambda (H_m)\}_{\lambda \in \Lambda;\, m=1,2 \dots,M}$ is a {\em frame} for $V_{\bf S}^2$.

\medskip 

On the other hand, the average samples of any $T=\sum_{n=1}^N\sum_{\lambda \in \Lambda} c_n(\lambda)\, \alpha_\lambda(S_n)$ in $V_{\bf S}^2$ 
can be expressed as a {\em discrete convolution system} in the product Hilbert space $\ell^2_{_N}(\Lambda):=\ell^2(\Lambda)\times \dots \times \ell^2(\Lambda)$ ($N$ times), i.e.,
\[
\big\langle T, \alpha_\lambda(Q_m) \big\rangle_{_{\mathcal{H}\mathcal{S}}}=\sum_{n=1}^N \big(a_{m,n} \ast_\Lambda c_n\big)(\lambda)\,,\quad \lambda \in \Lambda\,,\,\, \text{$n=1, 2,\dots, N$\, and \,$m=1,2 \dots,M$}\,,
\]
for some $a_{m,n} \in \ell^2(\Lambda)$ (see the details in Section \ref{section4}). Thus, we borrow the sampling scheme followed in  Ref.~\cite{garcia:19} in order  to generalize the above average sampling to the subspace $V_{\bf S}^2$ of $\mathcal{H}\mathcal{S}(\mathbb{R}^d)$. In that reference, discrete convolution systems on discrete abelian groups are proposed as a unifying strategy in sampling theory. This is done once we have taken into account that the {\em Weyl transform} is a unitary operator from $L^2(\mathbb{R}^d\times \widehat{\mathbb{R}}^d)$ onto $\mathcal{H}\mathcal{S}(\mathbb{R}^d)$ allowing to transfer sampling in a shift-invariant subspace of $L^2(\mathbb{R}^d\times \widehat{\mathbb{R}}^d)$ into sampling in the subspace $V_{\bf S}^2$ of $\mathcal{H}\mathcal{S}(\mathbb{R}^d)$. Thus, the Weyl transform will be the cornerstone of this work along with the well-known average sampling in shift-invariant subspaces. It is worth to mention that, instead of the Weyl transform, we could use the Kohn-Niremberg transform.

\medskip

The paper is organized as follows: Section \ref{section2} introduces the novel preliminaries needed in the sequel; these  comprise Hilbert-Schmidt operators and the Weyl transform, the concept of translation of an operator, {\em symplectic Fourier series} and Riesz sequences of translation operators in 
$\mathcal{H}\mathcal{S}(\mathbb{R}^d)$. For the theory of bases and frames in a Hilbert space we cite Ref.~\cite{ole:16}. In Section \ref{section3}, a generalized average sampling theorem is obtained for the one generator case $V_S^2$. In Section \ref{section4} the former sampling theory is developed for the multiple generators case $V_{\bf S}^2$. As it was said before, this study relies on the theory of bounded discrete convolution systems $\ell^2_{_N}(\Lambda) \rightarrow \ell^2_{_M}(\Lambda)$ and their relationship with  
frames of translates in $\ell^2_{_N}(\Lambda)$. The needed results on these topics will be also briefly reminded in this section. 
%%%%%%%%%%%%%%
\section{Preliminaries}
\label{section2}
%%%%%%%%%%%%%%%%%%%%%%%%%%%%%%%%
For the sake of completeness, in this section we briefly introduce the novel mathematical tools used throughout the work. For the needed theory of bases and frames in a Hilbert space we merely make reference to \cite{ole:16}; it mainly comprises Riesz sequences, dual Riesz bases and frames and its duals in a separable Hilbert space.
%%%%%%%%%%%%%%%%%%%%%%%%%%%%%%%%
\subsection{Hilbert-Schmidt operators and the Weyl transform}
%%%%%%%%%%%%%%%%%%%%%%%%%%%%%%%%
\subsubsection*{Hilbert-Schmidt operators on $L^2(\mathbb{R}^d)$}
%%%%%%%%%%%%%%%%%%%%%%%%%%%%%%%%
There are different ways to introduce the class of Hilbert-Schmidt operators in a Hilbert space, $L^2(\mathbb{R}^d)$ in our case. We follow that using the {\em Schmidt decomposition} (singular value decomposition) of a compact operator on $L^2(\mathbb{R}^d)$ (see, for instance, Ref.~\cite{conway:00}). Namely,  for a compact operator $S$ on $L^2(\mathbb{R}^d)$ there exist two orthonormal sequences $\{x_n\}_{n\in \mathbb{N}}$ and $\{y_n\}_{n\in \mathbb{N}}$ in $L^2(\mathbb{R}^d)$ and a bounded sequence of positive numbers $\{s_n(S)\}_{n\in \mathbb{N}}$ ({\em singular values} of $S$) such that 
\[
S=\sum_{n\in \mathbb{N}} s_n(S)\, x_n \otimes y_n\,,
\]
with convergence of the series in the operator norm. Here,  $x_n \otimes y_n$ denotes the rank one operator $ \big(x_n \otimes y_n\big)(e)=\big\langle e, y_n\big\rangle_{L^2} x_n$ for $e\in L^2(\mathbb{R}^d)$. For $1\le p<\infty$  we define the {\em Schatten-$p$ class} $\mathcal{T}^p$ class by
\[
\mathcal{T}^p:=\big\{ S \,\text{ compact on $L^2(\mathbb{R}^d)$}\, \,:\,\, \{s_n(S)\}_{n\in \mathbb{N}} \in \ell^p(\mathbb{N})\big\}\,.
\]
The Schatten-$p$ class $\mathcal{T}^p$ is a Banach space with the norm $\|S\|_{\mathcal{T}^p}^p=\sum_{n\in \mathbb{N}} s_n(S)^p$.

\medskip 

In particular, for $p=1$ we obtain the so-called {\em trace class operators $\mathcal{T}^1$}. The {\em trace} defined by ${\rm tr}(S)=\sum_{n\in \mathbb{N}} \langle Se_n, e_n\rangle_{L^2}$ is a well-defined bounded linear functional on $\mathcal{T}^1$, and independent of the used orthonormal basis $\{e_n\}_{n\in \mathbb{N}}$ in $L^2(\mathbb{R}^d)$. 

\medskip

For $p=2$ we obtain the class of {\em Hilbert-Schmidt operators} $\mathcal{H}\mathcal{S}(\mathbb{R}^d):=\mathcal{T}^2$. The space $\mathcal{H}\mathcal{S}(\mathbb{R}^d)$ endowed of the inner product $\big\langle S, T \big\rangle_{\mathcal{H}\mathcal{S}}={\rm tr}(ST^*)$ is a Hilbert space. For the norm of $S\in \mathcal{H}\mathcal{S}(\mathbb{R}^d)$ we have
\[
\|S\|_{\mathcal{H}\mathcal{S}}^2={\rm tr}(SS^*)=\sum_{n\in \mathbb{N}}\|S^*(e_n)\|^2_{L^2}=\sum_{n\in \mathbb{N}}\|S(e_n)\|^2_{L^2}=\sum_{n\in \mathbb{N}} s_n^2(S)\,.
\]
A Hilbert-Schmidt operator $S\in \mathcal{H}\mathcal{S}(\mathbb{R}^d)$ can be seen also as a compact operator on $L^2(\mathbb{R}^d)$ defined for each $f\in L^2(\mathbb{R}^d)$ by
\[
Sf(t)=\int_{\mathbb{R}^d} k_{_S}(t,x) f(x) dx\quad \text{ a.e. $t\in \mathbb{R}^d$}\,,
\]
with kernel $k_{_S} \in L^2(\mathbb{R}^{2d})$. Besides, $\big\langle S, T \big\rangle_{\mathcal{H}\mathcal{S}}=\big\langle k_{_S}, k_{_T}\big\rangle_{L^2(\mathbb{R}^{2d})}$\, for $S, T\in \mathcal{H}\mathcal{S}(\mathbb{R}^d)$.
%%%%%%%%%%%%%%%%%%%
\subsubsection*{The Weyl transform}
%%%%%%%%%%%%%%%%%%%
We introduce the Weyl transform in $L^2(\mathbb{R}^d\times \widehat{\mathbb{R}}^d)$,  the setting used in our context. More information and details about this transform, also valid in more general settings, can be found in Refs.~\cite{grochenig:01,pool:66,skret:20,werner:84}.

\medskip

The Weyl transform $L^2(\mathbb{R}^d\times \widehat{\mathbb{R}}^d) \ni f\longmapsto L_f \in \mathcal{H}\mathcal{S}(\mathbb{R}^d)$ is a unitary operator where 
$L_f: L^2(\mathbb{R}^d) \rightarrow L^2(\mathbb{R}^d)$ is the Hilbert-Schmidt operator defined in weak sense by
\[
\big\langle L_f \phi, \psi \big\rangle_{L^2(\mathbb{R}^d)}=\big\langle f, W(\psi, \phi)\big\rangle_{L^2(\mathbb{R}^d\times \widehat{\mathbb{R}}^d)}\,, \quad \phi, \psi \in L^2(\mathbb{R}^d)\,,
\]
where
\[
W(\psi, \phi)(x,\omega )=\int_{\mathbb{R}^d}\psi\big(x+\frac{t}{2}\big)\, \overline{\phi\big(x-\frac{t}{2}\big)}\,{\rm e}^{-2\pi i \,{\omega \cdot t}}dt\,,\quad (x,\omega ) \in \mathbb{R}^d\times \widehat{\mathbb{R}}^d\,,
\]
is the {\em cross-Wigner distribution} of the functions $\psi, \phi \in L^2(\mathbb{R}^d)$ (see Ref.~\cite{folland:89}).

\medskip

Thus, for each operator $S\in \mathcal{H}\mathcal{S}(\mathbb{R}^d)$ there exists a unique function $a_{_S}\in L^2(\mathbb{R}^d\times \widehat{\mathbb{R}}^d)$, called its {\em Weyl symbol}, and  such that 
\[
\langle S, T\rangle_{\mathcal{H}\mathcal{S}}=\langle a_{_S}, a_{_T} \rangle_{L^2(\mathbb{R}^d\times \widehat{\mathbb{R}}^d)}\quad \text{for each $S, T \in \mathcal{H}\mathcal{S}(\mathbb{R}^d)$}\,.
\]
In Ref.~\cite{pfander:13} a sampling theory for operators with bandlimited Kohn-Niremberg symbol is developed; see also Refs.~\cite{grochenig:14,krahmer:14,pfander:16}. The {\em Kohn-Niremberg symbol} is an alternative to the Weyl symbol which relates  Hilbert-Schmidt operators with pseudo-differential calculus. See Ref.~\cite{grochenig:01} for the transition between the Weyl calculus and the Kohn-Niremberg calculus.
%%%%%%%%%%%%%%%%%%%%%
\subsection{Translation of operators}
%%%%%%%%%%%%%%%%%%%%%
For $z=(x, \omega) \in \mathbb{R}^d\times \widehat{\mathbb{R}}^d$, the {\em time-frequency shift} operator $\pi(z): L^2(\mathbb{R}^d) \rightarrow L^2(\mathbb{R}^d)$ is defined as 
\[
\pi(z) \varphi(t)={\rm e}^{2\pi i \omega\cdot t}\varphi(t-x)\quad \text{for $\varphi \in L^2(\mathbb{R}^d)$}\,.
\]
It is used to define the {\em short-time Fourier transform} (Gabor transform) $V_\psi\varphi$ of $\varphi$ with respect to the window $\psi$, both in $L^2(\mathbb{R}^d)$, by
\[
V_\psi\varphi(z)=\big\langle \varphi, \pi(z)\psi \big\rangle_{L^2}\,,\quad z \in \mathbb{R}^d\times \widehat{\mathbb{R}}^d\,.
\]
The adjoint operator of $\pi(z)$ is $\pi(z)^*={\rm e}^{-2\pi i x\cdot \omega}\, \pi(-z)$ for $z=(x, \omega)$.
By using conjugation with $\pi(z)$ one can define the translation by $z\in \mathbb{R}^d\times \widehat{\mathbb{R}}^d$ of an operator $S \in \mathcal{H}\mathcal{S}(\mathbb{R}^d)$. Namely,
\[
\alpha_z(S):=\pi(z)\,S\,\pi(z)^*\,,\quad z \in \mathbb{R}^d\times \widehat{\mathbb{R}}^d\,.
\]
As an example, for $\varphi, \psi \in L^2(\mathbb{R}^d)$ we have $\alpha_z(\varphi \otimes \psi)=[\pi(z)\varphi] \otimes [\pi(z)\psi]$, \,$z \in \mathbb{R}^d\times \widehat{\mathbb{R}}^d$.
Since translation $\alpha_z$ defines a unitary operator on $\mathcal{H}\mathcal{S}(\mathbb{R}^d)$ and $\alpha_z \alpha_{z'}=\alpha_{z+z'}$ for $z, z' \in \mathbb{R}^d\times \widehat{\mathbb{R}}^d$ we can consider  
$\big\{\alpha_z\big\}_{z\in \mathbb{R}^d\times \widehat{\mathbb{R}}^d}$ as a {\em unitary representation} of the group $\mathbb{R}^d\times \widehat{\mathbb{R}}^d$ on the Hilbert space 
$\mathcal{H}\mathcal{S}(\mathbb{R}^d)$. For more properties and applications see, for instance, Refs.~\cite{luef:18,skret:20, werner:84}.

%%%%%%%%%%%%%%%%%%%%%
\subsection{Symplectic Fourier series}
%%%%%%%%%%%%%%%%%%%%%
Let $\Lambda$ be a {\em full rank lattice} in $\mathbb{R}^d\times \widehat{\mathbb{R}}^d$, i.e., $\Lambda=A\mathbb{Z}^{2d}$ with $A\in GL(2d,\mathbb{R})$ and volume $|\Lambda|=\det A$. Its dual group $\widehat{\Lambda}$ is identified with $(\mathbb{R}^d\times \widehat{\mathbb{R}}^d)/\Lambda^\circ$, where $\Lambda^\circ$ is the {\em annihilator group}
\[
\Lambda^\circ=\big\{\lambda^\circ \in \mathbb{R}^d\times \widehat{\mathbb{R}}^d\, \, :\, \, {\rm e}^{2\pi i\, \sigma(\lambda^\circ , \lambda)}=1 \text{ for all $\lambda \in \Lambda$} \big\}\,,
\]
where $\sigma$ denotes the {\em standard symplectic form} $\sigma (z,z')=\omega\cdot x'-\omega'\cdot x$ \, for $z=(x, \omega)$ and $z'=(x',\omega')$ in $\mathbb{R}^d\times \widehat{\mathbb{R}}^d$. Notice that the dual group $\widehat{\Lambda}$ is compact. The group $\Lambda^\circ$ is itself a lattice: the so-called {\em adjoint lattice} of $\Lambda$. The {\em symplectic characters} $\chi_z(z'):={\rm e}^{2\pi i \,\sigma(z,z')}$ are the natural way of identifying the group 
$\mathbb{R}^d\times \widehat{\mathbb{R}}^d$ with its dual group via the bijection $z\mapsto \chi_z$. Sometimes we will identify the phase space $\mathbb{R}^d\times \widehat{\mathbb{R}}^d$ with its isomorphic space $\mathbb{R}^{2d}$.

\medskip

The  Fourier transform of $c\in \ell^1(\Lambda)$ is the {\em symplectic Fourier series}
\[
\mathcal{F}_\sigma^\Lambda(c)(\dot z):=\sum_{\lambda \in \Lambda} c(\lambda)\, {\rm e}^{2\pi i \sigma(\lambda, z)}\,, \quad \dot z \in (\mathbb{R}^d\times \widehat{\mathbb{R}}^d)/\Lambda^\circ\,,
\]
where $\dot z$ denotes the image of $z$ under the natural quotient map $\mathbb{R}^d\times \widehat{\mathbb{R}}^d \rightarrow (\mathbb{R}^d\times \widehat{\mathbb{R}}^d)/\Lambda^\circ$.

\medskip

Since $\mathcal{F}_\sigma^\Lambda$ is a Fourier transform it extends to a unitary mapping $\mathcal{F}_\sigma^\Lambda : \ell^2(\Lambda) \rightarrow L^2(\,\widehat{\Lambda}\,)$. It satisfies $\mathcal{F}_\sigma^\Lambda(c\ast_\Lambda d)=\mathcal{F}_\sigma^\Lambda(c)\,\mathcal{F}_\sigma^\Lambda(d)$ for $c\in \ell^1(\Lambda)$ and $d\in \ell^2(\Lambda)$. Moreover, if $c, d \in \ell^2(\Lambda)$ with $c\ast_\Lambda d \in \ell^2(\Lambda)$, then $\mathcal{F}_\sigma^\Lambda(c\ast_\Lambda d)=\mathcal{F}_\sigma^\Lambda(c)\,\mathcal{F}_\sigma^\Lambda(d)$. As usual, the convolution $\ast_\Lambda$ of two sequences $c, d$  is defined by
\[
\big(c \ast_\Lambda d\big)(\lambda)=\sum_{\lambda'\in \Lambda} c(\lambda')\, d(\lambda-\lambda'),\quad \lambda \in \Lambda\,.
\]
For more details, see, for instance, Refs.~\cite{deitmar:14,folland:95, fuhr:05,skret:20}.
%%%%%%%%%%%%%%%%%%%%%%%%%%%%%%%%%%%%%
\subsection{Riesz sequences of translation operators in $\mathcal{H}\mathcal{S}(\mathbb{R}^d)$}
%%%%%%%%%%%%%%%%%%%%%%%%%%%%%%%%%%%%%
The Weyl transform $f\mapsto L_f$ is a unitary operator $L^2(\mathbb{R}^d\times \widehat{\mathbb{R}}^d) \rightarrow \mathcal{H}\mathcal{S}(\mathbb{R}^d)$ which respects translations in the sense that
\[
L_{T_zf}=\alpha_z(L_f)\quad \text{for $f\in L^2(\mathbb{R}^d\times \widehat{\mathbb{R}}^d)$ and $z=(x,\omega) \in \mathbb{R}^d\times \widehat{\mathbb{R}}^d$}\,.
\]
These two properties are crucial throughtout this work. In particular, as it was pointed out in Ref.~\cite{skret:20}, for fixed $S\in \mathcal{H}\mathcal{S}(\mathbb{R}^d)$ with Weyl symbol $a_{_S} \in L^2(\mathbb{R}^d\times \widehat{\mathbb{R}}^d)$ and lattice $\Lambda$ in $\mathbb{R}^d\times \widehat{\mathbb{R}}^d$, the sequence 
$\{\alpha_\lambda (S)\}_{\lambda \in \Lambda}$ is a Riesz sequence in $\mathcal{H}\mathcal{S}(\mathbb{R}^d)$, i.e., a Riesz basis for $V_S^2:=\overline{\espan}_{\mathcal{H}\mathcal{S}}\big\{ \alpha_\lambda(S)\big\}_{\lambda \in \Lambda}$, if and only if the sequence $\{T_\lambda (a_{_S})\}_{\lambda \in \Lambda}$ is a Riesz sequence in $L^2(\mathbb{R}^d\times \widehat{\mathbb{R}}^d)$, i.e., a Riesz basis for the shift-invariant subspace $V_{a_S}^2$ in $L^2(\mathbb{R}^d\times \widehat{\mathbb{R}}^d)$ generated by $a_{_S}$. 

A necessary and sufficient condition to be $\{\alpha_\lambda (S)\}_{\lambda \in \Lambda}$ a Riesz sequence in $\mathcal{H}\mathcal{S}(\mathbb{R}^d)$
is given in Ref.~\cite[Theorem 6.1]{skret:20}. There, it is assumed that $S\in \mathcal{B}$, a Banach space of continuous operators with Weyl symbol $a_{_S}$ in the {\em Feichtinger's algebra} $\mathcal{S}_0(\mathbb{R}^d\times \widehat{\mathbb{R}}^d)$ \cite{jakobsen:18}; in essence, 
$\mathcal{B}$ consists of trace class operators on $L^2(\mathbb{R}^d)$ with a norm-continuous inclusion 
$\iota : \mathcal{B} \hookrightarrow \mathcal{T}^1$  (see the details in Refs.~\cite{grochenig:99,skret:20}). The alluded necessary and sufficient  condition is that the continuous function
\[
P_{\Lambda^\circ}(|\mathcal{F}_W(S)|^2(\dot z):=\frac{1}{|\Lambda|}\sum_{\lambda^\circ \in \Lambda^\circ} |\mathcal{F}_W(S)(z+\lambda^\circ)|^2\,,\quad z\in \mathbb{R}^d\times \widehat{\mathbb{R}}^d\,,
\] 
has no zeros in $\widehat{\Lambda}$. It involves the {\em periodization operator} $P_{\Lambda^\circ}$ in $\Lambda^\circ$ and the {\em Fourier-Wigner transform} $\mathcal{F}_W$ of operator $S$. In this case, we have that $\mathcal{F}_W(S)=\mathcal{F}_\sigma(a_{_S})$, where $\mathcal{F}_\sigma$ denotes the {\em symplectic Fourier transform} of $a_{_S}$ defined by
\[
\mathcal{F}_\sigma(a_{_S})(z):=\int_{\mathbb{R}^d \times \widehat{\mathbb{R}}^d} a_{_S}(z')\, {\rm e}^{-2\pi i \,\sigma(z, z')}dz'\,,\quad z\in \mathbb{R}^d \times \widehat{\mathbb{R}}^d\,,
\]
where $\sigma$ denotes the standard symplectic form in $\mathbb{R}^d \times \widehat{\mathbb{R}}^d$. The Wigner-Fourier transform of an operator $S$ is defined as the function
\[
\mathcal{F}_W(S)(z):={\rm e}^{-\pi i \,x\cdot \omega}\, {\rm tr}[\pi(-z)S]\,, \quad z=(x,\omega)\in \mathbb{R}^d\times \widehat{\mathbb{R}}^d\,.
\]
See the details in Ref.~\cite{skret:20}. A similar result for a rank one operator $S=\psi\otimes \phi$, where $\psi, \phi \in  L^2(\mathbb{R}^d)$, can be found in Ref.~\cite{benedetto:06}.

\medskip

Analogously, a necessary and sufficient condition can be found for the multiply generated  case. Indeed, let ${\bf S}=\{S_1, S_2, \dots, S_N\}$ be a fixed subset of $\mathcal{H}\mathcal{S}(\mathbb{R}^d)$ and let $\Lambda$ be a lattice in $\mathbb{R}^d\times \widehat{\mathbb{R}}^d$; we look for a necessary and sufficient condition such that  $\{\alpha_\lambda (S_n)\}_{\lambda \in \Lambda;\, n=1,2,\dots,N}$ is a Riesz sequence for $\mathcal{H}\mathcal{S}(\mathbb{R}^d)$, i.e., a Riesz basis for the closed subspace
\[
V_{\bf S}^2:=\overline{\espan}_{\mathcal{H}\mathcal{S}}\big\{ \alpha_\lambda(S_n)\big\}_{\lambda \in \Lambda;\,n=1,2, \dots,N}\subset \mathcal{H}\mathcal{S}(\mathbb{R}^d)\,.
\]
As indicated above, it will be a Riesz sequence in $\mathcal{H}\mathcal{S}(\mathbb{R}^d)$ if and only if the sequence $\{T_\lambda (a_{_{S_n}})\}_{\lambda \in \Lambda;\,n=1,2,\dots,N}$ is a Riesz sequence in $L^2(\mathbb{R}^d\times \widehat{\mathbb{R}}^d)$. To this end,  we introduce the $N\times N$ matrix-valued function
\[
G_{\bf S}^\sigma(z):=\sum_{\lambda^\circ \in \Lambda^\circ} \mathcal{F}_\sigma(a_{_{\bf S}})(z+\lambda^\circ)\, \overline{ \mathcal{F}_\sigma(a_{_{\bf S}})(z+\lambda^\circ)}^\top\,, \quad z\in \mathbb{R}^d\times \widehat{\mathbb{R}}^d\,,
\]
where $\mathcal{F}_\sigma(a_{_{\bf S}})=\big(\mathcal{F}_\sigma(a_{_{S_1}}), \mathcal{F}_\sigma(a_{_{S_2}}), \dots,\mathcal{F}_\sigma(a_{_{S_N}})\big)^\top$. It is known (see, for instance, Ref.~\cite{aldroubi:05}) that the sequence $\{T_\lambda (a_{_{S_n}})\}_{\lambda \in \Lambda;\,n=1,2,\dots,N}$ is a Riesz sequence in $L^2(\mathbb{R}^d\times \widehat{\mathbb{R}}^d)$ if and only if there exist two constants $0<m\le M$ such that $m\,\mathbb{I}_N \le G_{\bf S}^\sigma(z) \le M\,\mathbb{I}_N$, a.e. $z\in \mathbb{R}^d\times \widehat{\mathbb{R}}^d$, where $\mathbb{I}_N$ denotes the $N\times N$ identity matrix. 

\medskip

Assuming as before that $S_n\in \mathcal{B}$, $n=1,2,\dots,N$, the functions $\mathcal{F}_\sigma(a_{_{S_n}})$ are continuous and $\mathcal{F}_W(S_n)=\mathcal{F}_\sigma(a_{_{S_n}})$ for $n=1,2,\dots,N$. Hence,  the above necessary and sufficient condition can be expressed in terms of the Wigner-Fourier transforms of operators $S_n$ by defining the $N\times N$ matrix-valued function
\[
G_{\bf S}^W(z):=\sum_{\lambda^\circ \in \Lambda^\circ} \mathcal{F}_W({\bf S})(z+\lambda^\circ)\, \overline{\mathcal{F}_W({\bf S})(z+\lambda^\circ)}^\top\,, \quad z\in \mathbb{R}^d\times \widehat{\mathbb{R}}^d\,,
\]
where $\mathcal{F}_W({\bf S})=\big(\mathcal{F}_W(S_1), \mathcal{F}_W(S_2), \dots,\mathcal{F}_W(S_N)\big)^\top$. The condition reads: 
\[
m\,\mathbb{I}_{_N} \le G_{\bf S}^W(z) \le M\,\mathbb{I}_{_N} \quad  \text{for any $z\in \mathbb{R}^d\times \widehat{\mathbb{R}}^d$}\,.
\]
The above inequalities mean that, for each $z\in \mathbb{R}^d\times \widehat{\mathbb{R}}^d$, the hermitian matrix $G_{\bf S}^W(z)$ satisfies
\[
m\, \|{\bf x}\|^2 \le \big\langle G_{\bf S}^W(z) {\bf x}, {\bf x} \big\rangle_{\mathbb{C}^N} \le M\,\|\bf {x}\|^2 \quad \text{for all ${\bf x}\in \mathbb{C}^N$}\,.
\]
%%%%%%%%%%%%%%%%%%
\section{An average sampling result}
\label{section3}
%%%%%%%%%%%%%%%%%%
Let $S\in \mathcal{H}\mathcal{S}(\mathbb{R}^d)$ be a Hilbert-Schmidt operator with Weyl symbol $a_{_S}\in L^2(\mathbb{R}^d\times \widehat{\mathbb{R}}^d)$, i.e., $L_{a_S}=S$, and let $\Lambda$ be a full rank lattice in $\mathbb{R}^d\times \widehat{\mathbb{R}}^d$. Associated to the operator $S$ we consider the invariant subspace in $\mathcal{H}\mathcal{S}(\mathbb{R}^d)$ defined as $V_S^2:=\overline{\espan}_{\mathcal{H}\mathcal{S}}\big\{ \alpha_\lambda(S)\big\}_{\lambda \in \Lambda}$, where 
$\alpha_\lambda(S)=\pi(\lambda)\,S\, \pi(\lambda)^*$, \,$\lambda \in \Lambda$. Assuming that $\{\alpha_\lambda (S)\}_{\lambda \in \Lambda}$ is a Riesz sequence for $\mathcal{H}\mathcal{S}(\mathbb{R}^d)$, the subspace $V_S^2$  can be expressed as
\[
V_S^2=\big\{\sum_{\lambda \in \Lambda} c(\lambda)\, \alpha_\lambda(S)\,\,: \,\, \{c(\lambda)\}_{\lambda \in \Lambda}\in \ell^2(\Lambda)\big\}\,.
\]
Associated with $V_S^2$ we consider the shift-invariant subspace in $L^2(\mathbb{R}^d\times \widehat{\mathbb{R}}^d)$ generated by $a_S$, i.e.,
\[
V_{a_S}^2=\big\{\sum_{\lambda \in \Lambda} c(\lambda)\, T_\lambda  a_{_S}\,\, :\,\, \{c(\lambda)\}_{\lambda \in \Lambda}\in \ell^2(\Lambda)\big\}\,.
\] 
Since the Weyl transform is a unitary operator between $L^2(\mathbb{R}^{2d})$ and $\mathcal{H}\mathcal{S}(\mathbb{R}^d)$ and $L_{T_zf}=\alpha_z(L_f)$, the sequence $\{\alpha_\lambda (S)\}_{\lambda \in \Lambda}$ is a Riesz sequence for $\mathcal{H}\mathcal{S}(\mathbb{R}^d)$ if and only if $\{T_\lambda  a_S\}_{\lambda \in \Lambda}$ is a Riesz sequence for $L^2(\mathbb{R}^d\times \widehat{\mathbb{R}}^d)$.

\medskip

Our sampling results rely on the following isomorphism $\mathcal{T}_S$ which involves the spaces $\ell^2(\Lambda)$, $V_{a_S}^2$ and $V_{S}^2$. Namely,
\begin{equation}
\label{isos}
\begin{array}[c]{cccccc}
  \mathcal{T}_S:&\ell^2(\Lambda) & \longrightarrow & V_{a_S}^2 \subset L^2(\mathbb{R}^d\times \widehat{\mathbb{R}}^d) & \longrightarrow & V_{S}^2 \subset \mathcal{H}\mathcal{S}(\mathbb{R}^d) 
  \phantom{\dfrac{a}{b}} \\
       & \{c(\lambda)\}_{\lambda \in \Lambda} & \longmapsto & \displaystyle{\sum_{\lambda \in \Lambda} c(\lambda)\, T_\lambda  a_S}  & \longmapsto  & \displaystyle{\sum_{\lambda \in \Lambda} c(\lambda)\, \alpha_\lambda(S)}\,,
\end{array}
\end{equation}
is the composition of the  isomorphism $\mathcal{T}_{a_S}$ between $\ell^2(\Lambda)$ and $V_{a_S}^2$ which maps the standard orthonormal basis 
$\{\delta_\lambda\}_{\lambda \in \Lambda}$ for $\ell^2(\Lambda)$ onto the Riesz basis $\{T_\lambda  a_{_S}\}_{\lambda \in \Lambda}$ for $V_{a_S}^2$, and the Weyl transform between $V_{a_S}^2$ and $V_{S}^2$.

\medskip

Next we define the {\em generalized average samples} for any $T=\sum_{\lambda \in \Lambda} c(\lambda)\, \alpha_\lambda(S)$ in $V_{S}^2$. Namely, for a fixed $Q\in \mathcal{H}\mathcal{S}(\mathbb{R}^d)$, not necessarily in $V_{S}^2$, we define the samples $\{s_{_T}(\lambda)\}_{\lambda \in \Lambda}$ of $T$ at the lattice 
$\Lambda$ by
\begin{equation}
\label{samples1}
s_{_T}(\lambda):=\big\langle T, \alpha_\lambda(Q)\big \rangle_{_{\mathcal{H}\mathcal{S}}}\,, \quad \lambda \in \Lambda\,.
\end{equation}
The first task is to obtain a more suitable expression for these samples. Indeed, for  the sample $s_T(\lambda)$, $\lambda \in \Lambda$, of $T=\sum_{\lambda \in \Lambda} c(\lambda)\, \alpha_\lambda(S)$ we have
\[
\begin{split}
s_{_T}(\lambda):=&\big\langle T, \alpha_\lambda(Q) \big\rangle_{_{\mathcal{H}\mathcal{S}}}=\big\langle a_{_T}, T_\lambda a_{_Q} \big\rangle_{_{L^2(\mathbb{R}^d\times \widehat{\mathbb{R}}^d)}}=
\big\langle \sum_{\lambda' \in \Lambda} c(\lambda')\, T_{\lambda'}  a_{_S}, T_\lambda a_{_Q} \big\rangle_{_{L^2(\mathbb{R}^d\times \widehat{\mathbb{R}}^d)}} \\
 =&\sum_{\lambda' \in \Lambda} c(\lambda')\big \langle T_{\lambda'}  a_{_S}, T_\lambda a_{_Q} \big\rangle_{_{L^2(\mathbb{R}^d\times \widehat{\mathbb{R}}^d)}}=\sum_{\lambda' \in \Lambda} c(\lambda') \big\langle a_{_S}, T_{\lambda-\lambda'} a_{_Q} \big\rangle = \big(\mathbf{c} \ast_\Lambda \mathbf{q}\big)(\lambda)\,,
\end{split}
\]
where $\mathbf{q}=\{q(\lambda)\}_{\lambda \in \Lambda}$ with $q(\lambda):=\big\langle a_{_S}, T_\lambda a_{_Q} \big\rangle_{_{L^2(\mathbb{R}^d\times \widehat{\mathbb{R}}^d)}}$, $\lambda \in \Lambda$, and $\mathbf{c}=\{c(\lambda)\}_{\lambda \in \Lambda}$. Notice that $\mathbf{q} \in \ell^2(\Lambda)$ since, in particular, $\{T_\lambda a_{_S}\}$ is a Bessel sequence in $L^2(\mathbb{R}^d\times \widehat{\mathbb{R}}^d)$.

\medskip

The main aim is the stable recovery  of any $T\in V_{S}^2$ from the data samples $\{s_T(\lambda)\}_{\lambda \in \Lambda}$ given in Eq.~\ref{samples1}. This is equivalent, via the isomorphism $\mathcal{T}_S$, to that the convolution operator  $\mathbf{c} \mapsto \mathbf{c}\ast_\Lambda \mathbf{q}$ defines an isomorphism 
$\ell^2(\Lambda) \rightarrow \ell^2(\Lambda)$. 

\medskip

On the other hand, we have that
\[
s_T(\lambda)=\big\langle T, \alpha_\lambda(Q) \big\rangle_{_{\mathcal{H}\mathcal{S}}}=\big(\mathbf{c} \ast_\Lambda \mathbf{q}\big)(\lambda)=\big\langle \mathbf{c}, T_\lambda \mathbf{q}^*\big\rangle_{\ell^2(\Lambda)}\,,\quad \lambda \in \Lambda\,,
\]
where $T_\lambda$ denotes the translation by $\lambda$ in $\ell^2(\Lambda)$, and $\mathbf{q}^*$ denotes the {\em involution} of $\mathbf{q}$ in $\ell^2(\Lambda)$, i.e., 
$q^*(\lambda)=\overline{q(-\lambda)}$, $\lambda \in \Lambda$, i.e., $T_\lambda q^*(\mu)=q^*(\mu-\lambda)$, $\mu \in  \Lambda$. As a consequence, the convolution operator  $\mathbf{c} \mapsto c\ast_\Lambda \mathbf{q}$ is an isomorphism in 
$\ell^2(\Lambda)$ if and only if the sequence $\{T_\lambda \mathbf{q}^*\}_{\lambda \in \Lambda}$ is a Riesz basis for $\ell^2(\Lambda)$.

\medskip

The convolution operator  $\mathbf{c} \mapsto \mathbf{c}\ast_\Lambda \mathbf{q}$ is a well-defined operator $\ell^2(\Lambda) \rightarrow \ell^2(\Lambda)$ (and consequently bounded) if and only if $\esup_{\xi \in \widehat{\Lambda}} |\mathcal{F}_\sigma^\Lambda(\mathbf{q})(\xi)| <\infty$, where $\mathcal{F}_\sigma^\Lambda(\mathbf{q})$ denotes the symplectic Fourier series of 
$\mathbf{q}$, i.e., the Fourier transform associated to the discrete group $\Lambda$. Besides, the convolution operator  $\mathbf{c} \mapsto \mathbf{c}\ast_\Lambda \mathbf{q}$ is bijective if and only if 
$0<\einf_{\xi \in \widehat{\Lambda}} |\mathcal{F}_\sigma^\Lambda(\mathbf{q})(\xi)|$. 

In summary, the sequence $\{T_\lambda \mathbf{q}^*\}_{\lambda \in \Lambda}$ is a Riesz basis for 
$\ell^2(\Lambda)$ if and only if
\[
0<\einf_{\xi \in \widehat{\Lambda}} |\mathcal{F}_\sigma^\Lambda(\mathbf{q})(\xi)| \le \esup_{\xi \in \widehat{\Lambda}} |\mathcal{F}_\sigma^\Lambda(\mathbf{q})(\xi)| <\infty\,.
\]
In this case, the dual basis of $\{T_\lambda \mathbf{q}^*\}_{\lambda \in \Lambda}$ has the form $\{T_\lambda \mathbf{p}\}_{\lambda \in \Lambda}$, where $\mathbf{p}\in \ell^2(\Lambda)$ satisfies that $\mathcal{F}_\sigma^\Lambda(\mathbf{p})=1/\mathcal{F}_\sigma^\Lambda(\mathbf{q}) \in L^2(\widehat{\Lambda})$.

\medskip

Finally, it is straightforward to deduce an {\em average sampling formula} valid for any $T=\sum_{\lambda \in \Lambda} c(\lambda)\, \alpha_\lambda(S)\in V_{S}^2$. Indeed, 
for the sequence $\mathbf{c}=\{c(\lambda)\}_{\lambda \in \Lambda}\in \ell^2(\Lambda)$ we have the Riesz basis expansion
\[
\mathbf{c}=\sum_{\lambda \in \Lambda} \big\langle \mathbf{c}, T_\lambda \mathbf{q}^*\big\rangle_{\ell^2(\Lambda)}\, T_\lambda \mathbf{p}=\sum_{\lambda \in \Lambda} s_T(\lambda)\,T_\lambda \mathbf{p}\quad \text{in $\ell^2(\Lambda)$}\,.
\]
Applying the isomorphism $\mathcal{T}_S$ one gets that there exists a unique $H\in V_{S}^2$  such that for each $T \in V_{S}^2$ the average sampling formula
\begin{equation}
\label{fsamp1}
T=\sum_{\lambda \in \Lambda}s_T(\lambda)\, \alpha_\lambda(H) \quad \text{in $\mathcal{H}\mathcal{S}$-norm}
\end{equation}
holds. To be more precise, $H=L_h \in V_{S}^2$ with Weyl symbol $h=\mathcal{T}_{a_S}\mathbf{p} \in V_{a_S}^2$. Notice that 
$\mathcal{T}_{a_S}(T_\lambda \mathbf{p})=T_\lambda(\mathcal{T}_{a_S}\mathbf{p})$, where the same symbol $T_\lambda$ denotes the translation by 
$\lambda$ in $\ell^2(\Lambda)$ and in $L^2(\mathbb{R}^d\times \widehat{\mathbb{R}}^d)$ respectively. Furthermore, the convergence of the series in Hilbert-Schmidt norm is unconditional since 
$\{\alpha_\lambda (H)\}_{\lambda \in \Lambda}$ is a Riesz basis for $V_{S}^2$.

\medskip

The above result can be generalized and summarized as follows:
\begin{dfn}
\label{def1}
A generalized stable sampling procedure in $V_{S}^2$ is a map $\mathcal{S}_{\text{samp}}:V_{S}^2 \rightarrow \ell^2(\Lambda)$ defined as 
\[
T=\sum_{\lambda \in \Lambda} c(\lambda)\, \alpha_\lambda(S)\in V_{S}^2 \longmapsto \{s_T(\lambda)\}_{\lambda \in \Lambda} \text{ such that $s_T:= \mathbf{c} \ast_\Lambda \mathbf{q} \in \ell^2(\Lambda)$}\,,
\]
where $\mathbf{q} \in \ell^2(\Lambda)$ satisfies the conditions
\begin{equation}
\label{rieszcondition}
0<\einf_{\xi \in \widehat{\Lambda}} |\mathcal{F}_\sigma^\Lambda(\mathbf{q})(\xi)| \le \esup_{\xi \in \widehat{\Lambda}} |\mathcal{F}_\sigma^\Lambda(\mathbf{q})(\xi)| <\infty\,.
\end{equation}
\end{dfn}
Associated to a generalized stable sampling procedure $\mathcal{S}_{\text{samp}}$ in $V_{S}^2$ we obtain the following sampling result:
\begin{teo}
\label{tsamp1}
Assume that a generalized stable sampling procedure $\mathcal{S}_{\text{samp}}$ in $V_{S}^2$ is given as in Definition \ref{def1} with associated sequence $\mathbf{q} \in \ell^2(\Lambda)$. Then, there exists a unique Hilbert-Schmidt operator $H\in V_{S}^2$ such that the sampling formula $T=\sum_{\lambda \in \Lambda}s_T(\lambda)\, \alpha_\lambda(H)$  holds in $V_{S}^2$. The convergence of the series is unconditional in Hilbert-Schmidt norm.

Reciprocally, if a sampling formula like \eqref{fsamp1} holds in $V_{S}^2$ where $s_T(\lambda)= \big(\mathbf{c} \ast_\Lambda \mathbf{q} \big)(\lambda)$, 
$\lambda \in \Lambda$, and $\{\alpha_\lambda (H)\}_{\lambda \in \Lambda}$ is a Riesz basis for $V_{S}^2$, then the conditions in Eq.\eqref{rieszcondition} are satisfied.
\end{teo}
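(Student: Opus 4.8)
The plan is to prove both implications by transporting everything to $\ell^2(\Lambda)$ through the isomorphism $\mathcal{T}_S$ of \eqref{isos}, and then invoking the equivalences already recorded in Section~\ref{section3}: namely, that $\{T_\lambda\mathbf{q}^*\}_{\lambda\in\Lambda}$ is a Riesz basis for $\ell^2(\Lambda)$ if and only if the convolution operator $\mathbf{c}\mapsto\mathbf{c}\ast_\Lambda\mathbf{q}$ is an isomorphism of $\ell^2(\Lambda)$, if and only if \eqref{rieszcondition} holds, in which case the dual basis is $\{T_\lambda\mathbf{p}\}_{\lambda\in\Lambda}$ with $\mathcal{F}_\sigma^\Lambda(\mathbf{p})=1/\mathcal{F}_\sigma^\Lambda(\mathbf{q})$.

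\emph{Direct statement.} Assuming \eqref{rieszcondition}, I would first record the unconditionally convergent Riesz-basis expansion in $\ell^2(\Lambda)$,
\[
\mathbf{c}=\sum_{\lambda\in\Lambda}\big\langle\mathbf{c},T_\lambda\mathbf{q}^*\big\rangle_{\ell^2(\Lambda)}\,T_\lambda\mathbf{p}
=\sum_{\lambda\in\Lambda}\big(\mathbf{c}\ast_\Lambda\mathbf{q}\big)(\lambda)\,T_\lambda\mathbf{p}
=\sum_{\lambda\in\Lambda}s_T(\lambda)\,T_\lambda\mathbf{p}\,,
\]
valid for every $\mathbf{c}\in\ell^2(\Lambda)$. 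Then I apply $\mathcal{T}_S$, which intertwines translation $T_\lambda$ in $\ell^2(\Lambda)$ with $\alpha_\lambda$ on $V_S^2$, i.e. $\mathcal{T}_S(T_\lambda\mathbf{d})=\alpha_\lambda(\mathcal{T}_S\mathbf{d})$; setting $H:=\mathcal{T}_S\mathbf{p}\in V_S^2$ (equivalently $H=L_h$ with $h=\mathcal{T}_{a_S}\mathbf{p}\in V_{a_S}^2$) gives $T=\mathcal{T}_S\mathbf{c}=\sum_{\lambda\in\Lambda}s_T(\lambda)\,\alpha_\lambda(H)$ for every $T\in V_S^2$. The family $\{\alpha_\lambda(H)\}_{\lambda\in\Lambda}=\mathcal{T}_S\{T_\lambda\mathbf{p}\}_{\lambda\in\Lambda}$ is the image of a Riesz basis under an isomorphism, hence a Riesz basis for $V_S^2$, which yields unconditional convergence in $\mathcal{H}\mathcal{S}$-norm. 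For uniqueness, if $H'\in V_S^2$ also reproduces every $T$, then $\sum_{\lambda}s(\lambda)\big(\alpha_\lambda(H)-\alpha_\lambda(H')\big)=0$ for all $s\in\ell^2(\Lambda)$, because $T\mapsto\{s_T(\lambda)\}$ is onto $\ell^2(\Lambda)$ (the convolution operator being bijective); taking $s=\delta_0$ forces $H-H'=0$.

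\emph{Reciprocal statement.} Suppose a formula as in \eqref{fsamp1} holds on $V_S^2$ with $s_T(\lambda)=(\mathbf{c}\ast_\Lambda\mathbf{q})(\lambda)$ and with $\{\alpha_\lambda(H)\}_{\lambda\in\Lambda}$ a Riesz basis for $V_S^2$. Putting $\mathbf{p}:=\mathcal{T}_S^{-1}H$, the family $\{T_\lambda\mathbf{p}\}_{\lambda\in\Lambda}=\mathcal{T}_S^{-1}\{\alpha_\lambda(H)\}_{\lambda\in\Lambda}$ is a Riesz basis for $\ell^2(\Lambda)$, and pulling \eqref{fsamp1} back by $\mathcal{T}_S^{-1}$ gives
\[
\mathbf{c}=\sum_{\lambda\in\Lambda}\big(\mathbf{c}\ast_\Lambda\mathbf{q}\big)(\lambda)\,T_\lambda\mathbf{p}
=\sum_{\lambda\in\Lambda}\big\langle\mathbf{c},T_\lambda\mathbf{q}^*\big\rangle_{\ell^2(\Lambda)}\,T_\lambda\mathbf{p}
\qquad\text{for all }\mathbf{c}\in\ell^2(\Lambda)\,.
\]
Since the analysis coefficients of a Riesz basis are unique and, in $\ell^2(\Lambda)$, the dual of a translation-invariant Riesz basis is again translation-invariant, $\{T_\lambda\mathbf{q}^*\}_{\lambda\in\Lambda}$ must be the unique dual Riesz basis of $\{T_\lambda\mathbf{p}\}_{\lambda\in\Lambda}$, hence itself a Riesz basis for $\ell^2(\Lambda)$. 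By the characterization recalled above, and using $|\mathcal{F}_\sigma^\Lambda(\mathbf{q}^*)|=|\mathcal{F}_\sigma^\Lambda(\mathbf{q})|$, this is precisely \eqref{rieszcondition}.

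\emph{Where the work is.} There is no serious obstacle here: the preliminaries already supply the characterization of Riesz bases of translates in $\ell^2(\Lambda)$ via $\mathcal{F}_\sigma^\Lambda$, together with the transfer principle furnished by the Weyl transform. The only points requiring a little care are the intertwining identity $\mathcal{T}_S(T_\lambda\mathbf{d})=\alpha_\lambda(\mathcal{T}_S\mathbf{d})$, which is immediate from $L_{T_zf}=\alpha_z(L_f)$ and the definition of $\mathcal{T}_{a_S}$, and, in the reciprocal part, the identification of $\{T_\lambda\mathbf{q}^*\}_{\lambda\in\Lambda}$ as the dual Riesz basis, which relies on the shift-invariance of dual bases in $\ell^2(\Lambda)$ and on the harmless passage between $\mathbf{q}$ and its involution $\mathbf{q}^*$ on the Fourier side.
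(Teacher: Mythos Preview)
Your proof is correct and follows essentially the same route as the paper: transport everything to $\ell^2(\Lambda)$ via $\mathcal{T}_S$, use the equivalence between \eqref{rieszcondition} and $\{T_\lambda\mathbf{q}^*\}_{\lambda\in\Lambda}$ being a Riesz basis with dual $\{T_\lambda\mathbf{p}\}_{\lambda\in\Lambda}$, and then push the resulting expansion back through $\mathcal{T}_S$. You add a bit more detail than the paper does---an explicit uniqueness argument for $H$ and a careful justification that $\{T_\lambda\mathbf{q}^*\}_{\lambda\in\Lambda}$ must coincide with the dual Riesz basis in the reciprocal direction---but the underlying argument is the same.
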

\begin{proof}
The first part of the proof has been done above. For the second part, observe that $\{\mathcal{T}_S^{-1}\big(\alpha_\lambda(H)\big)\}_{\lambda \in \Lambda}$ is a Riesz basis for $\ell^2(\Lambda)$ with dual basis $\{T_\lambda \mathbf{q}^*\}_{\lambda \in \Lambda}$ which implies conditions in \eqref{rieszcondition}.
\end{proof}
%%%%%%%%%%%%%%%%%%
\subsubsection*{Some comments} 
%%%%%%%%%%%%%%%%%%
Closing this section some comments are pertinent; the involved details can be found in Refs.~\cite{luef:18,skret:20}:
\begin{itemize}
\item The necessary and sufficient condition on $\{\alpha_\lambda (S)\}_{\lambda \in \Lambda}$ to be a Riesz sequence for $\mathcal{H}\mathcal{S}(\mathbb{R}^d)$ can be expressed also in terms of the symplectic Fourier series of the convolution of two operators. It reads:  The function 
$\mathcal{F}_\sigma^\Lambda(S\ast_\Lambda\check{S}^*)=P_{\Lambda^\circ}(|\mathcal{F}_W(S)|^2$ has no zeros in $\widehat{\Lambda}$. 

The {\em convolution of two operators $S, T$} is formally defined as the function 
\[
S\ast T(z):= {\rm tr} [S\alpha_z(\check{T})]\,, \quad z\in \mathbb{R}^{2d}\,, 
\]
where $\check{T}=PTP$ and $P$ denotes the {\em parity operator} $(P\phi)(t)=\phi(-t)$ for $\phi \in L^2(\mathbb{R}^d)$. Replacing $\mathbb{R}^{2d}$ by a lattice $\Lambda\subset \mathbb{R}^{2d}$ we obtain the convolution of two operators $S, T$ at $\Lambda$ as the sequence $S\ast_\Lambda T(\lambda):=S\ast T(\lambda)$ for $\lambda \in \Lambda$.

\item The {\em convolution of a function $f$ and an operator $S$} is formally defined by the operator-valued integral $f\ast S=\int_{\mathbb{R}^{2d}}f(z)\,\alpha_z(S)dz$. Replacing $\mathbb{R}^{2d}$ by a lattice $\Lambda\subset \mathbb{R}^{2d}$ we get the definition $\mathbf{c}\ast_\Lambda S:=S\ast_\Lambda \mathbf{c}:=\sum_{\lambda\in \Lambda} c(\lambda)\,\alpha_\lambda(S)$. As a consequence, the subspace $V_S^2$ can be also expressed  as 
\[
V_S^2=\ell^2(\Lambda) \ast_{\Lambda}S\,.
\]
 
\item The average sample $s_{_T}(\lambda):=\big\langle T, \alpha_\lambda(Q)\big \rangle_{_{\mathcal{H}\mathcal{S}}}$, $\lambda \in \Lambda$, can be expressed, under appropriate hypotheses (see, for instance, Ref.~\cite{skret:20}), as a convolution like in the classical shift-invariant  case. Indeed, for each $f\in L^2(\mathbb{R}^d)$ its average samples are 
\[
\big\langle f, \psi(\cdot-n)\big\rangle_{_{L^2(\mathbb{R})}}=f\ast \widetilde{\psi}(n)\,, \quad n\in \mathbb{Z}\,,
\]
where $\widetilde{\psi}(t)=\overline{\psi(-t)}$ is the average function. In the case treated here, an easy calculation gives 
\[
\big\langle T, \alpha_\lambda(Q)\big \rangle_{_{\mathcal{H}\mathcal{S}}}= {\rm tr}[T\alpha_\lambda(Q)^*]={\rm tr}[T\alpha_\lambda(Q^*)]=T\ast_\Lambda\check{Q}^*(\lambda)=T \ast_\Lambda \widetilde{Q}(\lambda)\,,\quad \lambda \in \Lambda\,,
\]
where $\widetilde{Q}=\check{Q}^*$. By using the convolution notations, the sampling formula \eqref{fsamp1} can be expressed by
\[
T=(T\ast_\Lambda \widetilde{Q}) \ast_\Lambda H\quad \text{for each $T\in V_S^2$}\,.
\]

\item It is worth to mention that the sampling results obtained in this paper can be also derived by using the Kohn-Niremberg symbol of a Hilbert-Schmidt operator instead of the Weyl symbol. If $a_{_S}$ denotes the Weyl symbol  of $S$, its Kohn-Niremberg $\sigma_{_S}$ is given by $Ua_{_S}$ where $U$ is the unitary operator in $L^2(\mathbb{R}^{2d})$ such that $\widehat{Ua_{_S}}(\xi, u)={\rm e}^{\pi i u\cdot\xi}\, \widehat{a}_{_S}(\xi, u)$, $(\xi, u)\in \mathbb{R}^{2d}$ (see the details in Ref.~\cite{grochenig:01}).

\end{itemize}

%%%%%%%%%%%%%%%%%%%%%%%%%%%%%%%%%%%%%%%%
\section{The case of multiple generators}
\label{section4}
%%%%%%%%%%%%%%%%%%%%%%%%%%%%%%%%%%%%%%%%
Given a fixed set ${\bf S}=\{S_1, S_2, \dots, S_N\} \subset \mathcal{H}\mathcal{S}(\mathbb{R}^d)$, assume now that $\{\alpha_\lambda (S_n)\}_{\lambda \in \Lambda;\, n=1,2,\dots,N}$ is a Riesz sequence for $\mathcal{H}\mathcal{S}(\mathbb{R}^d)$ where $\Lambda \subset \mathbb{R}^d\times \widehat{\mathbb{R}}^d$ is a full rank lattice with dual group $\widehat{\Lambda}$. Thus, we consider the closed subspace $V_{\bf S}^2$  in $\mathcal{H}\mathcal{S}(\mathbb{R}^d)$ given by
\[
V_{\bf S}^2=\Big\{\sum_{n=1}^N\sum_{\lambda \in \Lambda} c_n(\lambda)\, \alpha_\lambda(S_n)\,\, :\,\, \{c_n(\lambda)\}_{\lambda \in \Lambda}\in \ell^2(\Lambda)\,, n=1, 2, \dots, N \Big\}\,.
\]
For each $T=\sum_{n=1}^N\sum_{\lambda \in \Lambda} c_n(\lambda)\, \alpha_\lambda(S_n)$ in $V_{\bf S}^2$ we define a set of {\em generalized samples}
\[
\mathbf{s}_{_T}(\lambda)=\big(s_{_{T,1}}(\lambda), s_{_{T,2}}(\lambda), \dots, s_{_{T,M}}(\lambda)\big)^\top\,,\quad \lambda \in \Lambda\,,
\] 
by means of a {\em discrete convolution system} associated to a matrix $A=[a_{m,n}] \in \mathcal{M}_{_{M\times N}}\big(\ell^2(\Lambda)\big)$, i.e., an $M\times N$ matrix with entries in $\ell^2(\Lambda)$, as follows
\[
T=\sum_{n=1}^N\sum_{\lambda \in \Lambda} c_n(\lambda)\, \alpha_\lambda(S_n) \in V_{\bf S}^2 \longmapsto \mathbf{s}_{_T}(\lambda):=\big(A\ast_\Lambda \mathbf{c}\big)(\lambda)=\sum_{\lambda'\in \Lambda} A(\lambda-\lambda')\, \mathbf{c}(\lambda'),\quad \lambda \in \Lambda\,,
\]
where $\mathbf{c}=(c_1, c_2, \dots, c_N)^\top \in \ell^2_{_N}(\Lambda):=\ell^2(\Lambda)\times \dots \times \ell^2(\Lambda)$ ($N$ times). Note that the $m$-th entry of \,$A \ast_\Lambda \mathbf{c}$\, is\, $s_{_{T,m}}=\sum_{n=1}^N (a_{m,n}\ast_\Lambda c_{n})$. 

\medskip

As in Section \ref{section3}, it is easy to deduce that these samples generalize the average samples
\[
\mathbf{s}_{_T}(\lambda)=\big(\langle T, \alpha_\lambda(Q_1) \rangle_{_{\mathcal{H}\mathcal{S}}}, \langle T, \alpha_\lambda(Q_2) \rangle_{_{\mathcal{H}\mathcal{S}}}, \dots, \langle T, \alpha_\lambda(Q_M) \rangle_{_{\mathcal{H}\mathcal{S}}}\big)^\top\,,\quad \lambda \in \Lambda\,,
\]
obtained from $M$ fixed operators $Q_1, Q_2, \dots, Q_M$ in $\mathcal{H}\mathcal{S}(\mathbb{R}^d)$, not necessarily in $V_{\bf S}^2$. Indeed, for  the $m$-th component of the sample $\mathbf{s}_{_T}(\lambda)$, $\lambda \in \Lambda$, we have
\[
\begin{split}
s_{_{T,m}}(\lambda):=&\big\langle T, \alpha_\lambda(Q_m) \big\rangle_{_{\mathcal{H}\mathcal{S}}}=\big\langle a_{_T}, T_\lambda a_{_{Q_m}} \big\rangle_{_{L^2(\mathbb{R}^d\times \widehat{\mathbb{R}}^d)}}=
\big\langle \sum_{n=1}^N\sum_{\lambda' \in \Lambda} c_n(\lambda')\, T_{\lambda'}  a_{_{S_n}}, T_\lambda a_{_{Q_m}} \big\rangle_{_{L^2(\mathbb{R}^d\times \widehat{\mathbb{R}}^d)}} \\
=&\sum_{n=1}^N\sum_{\lambda' \in \Lambda} c_n(\lambda') \langle T_{\lambda'}  a_{_{S_n}}, T_\lambda a_{_{Q_m}} \rangle_{_{L^2(\mathbb{R}^d\times \widehat{\mathbb{R}}^d)}} =\sum_{n=1}^N\sum_{\lambda' \in \Lambda} c_n(\lambda') \langle a_{_{S_n}}, T_{\lambda-\lambda'} a_{_{Q_m}} \rangle_{_{L^2(\mathbb{R}^d\times \widehat{\mathbb{R}}^d)}} \\
=&\sum_{n=1}^N \big(a_{m,n} \ast_\Lambda c_n\big)(\lambda)\,,
\end{split}
\]
where $a_{m,n}(\lambda):=\big\langle a_{_{S_n}}, T_\lambda a_{_{Q_m}} \big\rangle_{_{L^2(\mathbb{R}^{2d})}}$, \,$\lambda \in \Lambda$, and $a_{_{S_n}}$, $a_{_{Q_m}}$ are the Weyl symbols of $S_n$, $Q_m$ respectively. 
             \medskip

The main needed properties of a {\em discrete convolution system} $\mathcal{A}$ with associated matrix $A=[a_{m,n}] \in \mathcal{M}_{_{M\times N}}\big(\ell^2(\Lambda)\big)$ given by
\begin{equation}
\label{syscon}
\begin{array}{rccl}
	&\mathcal{A}: \ell^2_{_N}(\Lambda)  &\longrightarrow &\ell^2_{_M}(\Lambda)\\
	  & \mathbf{c} &\longmapsto &  \mathcal{A}(\mathbf{c})=A\ast_\Lambda \mathbf{c}\,,
    \end{array}
\end{equation} 
are summarized below. The details and proofs can be found, for instance, in Refs.~\cite{garcia:19,gerardo:19}.

\begin{enumerate}
\item $\mathcal{A}$ is a well-defined bounded operator if and only if the matrix $\widehat{A}\in \mathcal{M}_{_{M\times N}}\big(L^\infty(\widehat{\Lambda})\big)$, where $\widehat{A}(\xi):=\big[\mathcal{F}_\sigma^\Lambda(a_{m,n})(\xi)\big]$, a.e. $\xi \in \widehat{\Lambda}$, denotes the {\em transfer matrix} of $A$ (along this section we identify the matrix $\widehat{A}$ with entries in $L^\infty(\widehat{\Lambda})$ and the essentially bounded matrix-valued function 
$\widehat{A}(\xi)$, a.e. $\xi \in \widehat{\Lambda}$\,). Having in mind the equivalence between the spectral and Frobenius norms for matrices (see Ref.~\cite{horn:99}), the above condition is equivalent to the new condition 
\[
\beta_A:= \esup_{\xi\in \widehat{\Lambda}} \lambda_{\text{max}}[\widehat{A}(\xi)^*\widehat{A}(\xi)]<+\infty\,, 
\]
where 
$\lambda_{\text{max}}$ denotes the largest eigenvalue of the positive semidefinite matrix $\widehat{A}(\xi)^*\widehat{A}(\xi)$.

\item Its adjoint operator $\mathcal{A}^*: \ell^2_{_M}(\Lambda) \rightarrow \ell^2_{_N}(\Lambda)$ is also a bounded convolution system with associated matrix $A^*=\big[a^*_{m,n}\big]^\top \in  \mathcal{M}_{_{N\times M}}\big(\ell^2(\Lambda)\big)$, where $a^*_{m,n}$ denotes the involution $a^*_{m,n}(\lambda):=\overline{a_{m,n}(-\lambda)}$, $\lambda\in \Lambda$. Its transfer matrix is $\widehat{A^*}(\xi)$ is just the transpose conjugate of $\widehat{A}(\xi)$, i.e., $\widehat{A}(\xi)^*$, a.e. $\xi \in \widehat{\Lambda}$.

\item The bounded operator $\mathcal{A}$ is injective with a closed range if and only if the operator $\mathcal{A}^*\,\mathcal{A}$ is invertible; equivalently, if and only if the constant 
\[
\alpha_A:=\einf_{\xi\in \widehat{\Lambda}} \lambda_{\text{min}}[\,\widehat{A}(\xi)^*\widehat{A}(\xi)]>0\,,
\]
where $\lambda_{\text{min}}$ denotes the smallest eigenvalue of the positive semidefinite matrix $\widehat{A}(\xi)^*\widehat{A}(\xi)$. Equivalently, we have
$\delta_A:=\einf_{\xi\in \widehat{\Lambda}} \det[\,\widehat{A}(\xi)^*\widehat{A}(\xi)]>0$.

\item The bounded operator $\mathcal{A}$ is an isomorphism if and only if $M=N$ and the constant $\einf_{\xi \in \widehat{\Lambda}}\big|\det [\widehat{A}(\xi)]\big|>0$.

\end{enumerate}

\noindent Besides, discrete convolution systems are intimately related with translations $T_\lambda$ in $\ell^2_{_N}(\Lambda)$; remind that for $\mathbf{c}\in \ell^2_{_N}(\Lambda)$,  $T_{\lambda}\,\mathbf{c}(\lambda')=\mathbf{c}(\lambda'-\lambda)$, $\lambda' \in \Lambda$. Indeed, let $\mathbf{a}^*_{m}$ denote the $m$-th column of the matrix $A^*$, then the $m$-th component of  $\mathcal{A}(\mathbf{c})$ is
\[
[A\ast \mathbf{c}]_{m}(\lambda)=\sum_{n=1}^N (a_{m,n} \ast_{_\Lambda} c_n)(\lambda) =\big\langle \mathbf{c}, T_{\lambda}\,\mathbf{a}^*_{m} \big\rangle_{\ell^2_{_N}(\Lambda)}\,, \quad \lambda \in \Lambda\,.
\]
As a consequence:

\begin{enumerate}[(i)]

\item  The operator $\mathcal{A}$ is the {\em analysis operator} of the sequence $\big\{T_{\lambda} \mathbf{a}^*_{m}\big\}_{\lambda \in \Lambda;\, m=1,2,\dots,M}$ in $\ell^2_{_N}(\Lambda)$. Thus, the sequence $\big\{T_\lambda\,\mathbf{a}_m^*\big\}_{\lambda \in \Lambda;\,m=1, 2,\dots,M}$ is a Bessel sequence in $\ell_{_N}^2(\Lambda)$ if and only if the convolution system $\mathcal{A}$ is bounded, or equivalently, if and only $\beta_A<+\infty$.

\item The sequence $\big\{T_{\lambda}\,\mathbf{a}^*_{m}\big\}_{\lambda \in \Lambda;\, m=1,2,\dots,M}$  is a frame for $\ell^2_{_N}(\Lambda)$ if and only if its bounded analysis operator is injective with a closed range (see Ref.~\cite{ole:16}).  Therefore, it will be a frame for $\ell^2_{N}(\Lambda)$ if and only if 
\[
0<\alpha_A:=\einf_{\xi\in \widehat{\Lambda}} \lambda_{\text{min}}[\,\widehat{A}(\xi)^*\widehat{A}(\xi)] \le \beta_A:= \esup_{\xi\in \widehat{\Lambda}} \lambda_{\text{max}}[\widehat{A}(\xi)^*\widehat{A}(\xi)]<+\infty\,.
\]

\item Concerning the duals of $\big\{T_{\lambda}\,\mathbf{a}^*_{m}\big\}_{\lambda \in \Lambda;\, m=1,2,\dots,M}$ having the same structure, consider two matrices $\widehat{A}\in \mathcal{M}_{_{M\times N}}(L^\infty(\widehat{\Lambda}))$ and $\widehat{B}\in \mathcal{M}_{_{N\times M}}(L^\infty(\widehat{\Lambda}))$, and let $\mathbf{b}_{m}$ denotes the $m$-th column of the matrix $B$ associated to $\widehat{B}$. Then, the sequences $\big\{T_\lambda\,\mathbf{a}^*_{m}\big\}_{\lambda \in \Lambda;\, m=1,2,\dots,M}$ and $\big\{T_\lambda\,\mathbf{b}_{m}\big\}_{\lambda \in \Lambda;\, m=1,2,\dots,M}$ form a pair of dual frames for $\ell^2_{_N}(\Lambda)$ if and only if $\widehat{B}(\xi)\,\widehat{A}(\xi)=I_{_N}$,\,\, a.e. $\xi \in \widehat{\Lambda}$; equivalently, if and only if $\mathcal{B}\,\mathcal{A}=\mathcal{I}_{\ell^2_{N}(\Lambda)}$, i.e., the convolution system 
$\mathcal{B}$ is a left-inverse of the convolution system $\mathcal{A}$ (see Ref.~\cite{garcia:19}). Thus in $\ell^2_{_N}(\Lambda)$ we have the frame expansion
\[
\mathbf{c}=\sum_{m=1}^M \sum_{\lambda\in \Lambda} \big\langle \mathbf{c}, T_\lambda \mathbf{a}^*_{m} \big \rangle_{\ell^2_{_N}(\Lambda)}\, T_\lambda \mathbf{b}_{m}\quad \text{for each $\mathbf{c} \in \ell^2_{_N}(\Lambda)$}\,.
\]
Note that a possible left-inverse of the matrix $\widehat{A}(\xi)$ is given by its Moore-Penrose pseudo-inverse $\widehat{A}(\xi)^\dag=\big[\widehat{A}(\xi)^*\widehat{A}(\xi)\big]^{-1}\widehat{A}(\xi)^*$, a.e. $\xi \in \widehat{\Lambda}$.

\item For the case $M=N$ the sequence  $\big\{T_\lambda\,\mathbf{a}_m^*\big\}_{\lambda\in \Lambda;\,m=1, 2,\dots,N}$ is a Riesz basis for $\ell_{_N}^2(\Lambda)$. The square matrix $\widehat{A}(\xi)$ is invertible, a.e. $\xi \in \widehat{\Lambda}$, and from the columns of $\widehat{A}(\xi)^{-1}$ we get its dual Riesz basis $\big\{T_\lambda\,\mathbf{b}_m\big\}_{\lambda\in \Lambda;\,m=1, 2,\dots,N}$.

\end{enumerate}

\medskip

Now suppose that a sampling procedure is given in $ V_{\bf S}^2$ by means of a discrete convolution system $\mathcal{A}$, i.e., 
\[
T=\sum_{n=1}^N\sum_{\lambda \in \Lambda} c_n(\lambda)\, \alpha_\lambda(S_n) \in V_{\bf S}^2 \longmapsto \mathbf{s}_{_T}:=A\ast_\Lambda \mathbf{c} \in \ell^2_{_M}(\Lambda)\,,
\]
and assume, in the light of the above discussion, that the sequence $\big\{T_\lambda\,\mathbf{a}^*_{m}\big\}_{\lambda \in \Lambda;\, m=1,2,\dots,M}$ is a frame for $\ell^2_{_N}(\Lambda)$ with a dual frame $\big\{T_\lambda\,\mathbf{b}_{m}\big\}_{\lambda \in \Lambda;\, m=1,2,\dots,M}$. Then, we can recover any $T=\sum_{n=1}^N\sum_{\lambda \in \Lambda} c_n(\lambda)\, \alpha_\lambda(S_n) \in V_{\bf S}^2$ from its samples $\{\mathbf{s}_{_T}(\lambda)\}_{\lambda \in \Lambda}$ by means of a frame expansion. Indeed, for the coefficients $\mathbf{c}=(c_1, c_2, \dots, c_{_N})^\top \in \ell^2_{_N}(\Lambda)$ of $T$  we have
\begin{equation}
\label{cexpan}
\mathbf{c}=\sum_{m=1}^M \sum_{\lambda\in \Lambda} \big\langle \mathbf{c}, T_\lambda \mathbf{a}^*_{m} \big \rangle_{\ell^2_{_N}(\Lambda)}\, T_\lambda \mathbf{b}_{m}=\sum_{m=1}^M \sum_{\lambda\in \Lambda} s_{_{T,m}}(\lambda)\,T_\lambda \mathbf{b}_{m}
\quad \text{in $\ell^2_{_N}(\Lambda)$}\,.
\end{equation}
Consider the corresponding isomorphism $\mathcal{T}_{\bf S}$ in \eqref{isos} which in this case reads:
\[
\begin{array}[c]{cccccc}
 \mathcal{T}_{\bf S}:&  \ell^2_{_N}(\Lambda) & \longrightarrow & V_{a_{\bf S}}^2 \subset L^2(\mathbb{R}^{2d}) & \longrightarrow & V_{\bf S}^2 \subset \mathcal{H}\mathcal{S}(\mathbb{R}^d)
 \phantom{\dfrac{a}{b}} \\
       & \mathbf{c}=(c_1, c_2, \dots, c_N)^\top & \longmapsto &\displaystyle{\sum_{n=1}^N \sum_{\lambda \in \Lambda} c_n(\lambda)\, T_\lambda  a_{_{S_n}}}  & \longmapsto  &\displaystyle{\sum_{n=1}^N \sum_{\lambda \in \Lambda} c_n(\lambda)\, \alpha_\lambda(S_n)}\,,
\end{array}
\]
where $a_{_{S_n}}$ denotes the Weyl symbol of $S_n$, $n=1,2, \dots,N$.
Applying the isomorphism $\mathcal{T}_{\bf S}$ in expansion \eqref{cexpan}, for each $T\in V_{\bf S}^2$ we obtain the sampling expansion
\[
T=\sum_{m=1}^M \sum_{\lambda\in \Lambda} s_{_{T,m}}(\lambda)\, \alpha_\lambda(H_m)\quad \text{in $\mathcal{H}\mathcal{S}$-norm}\,,
\] 
where $H_m=L_{h_m} \in V_{\bf S}^2$ with Weyl symbol $h_m=\mathcal{T}_{a_{\bf S}}(\mathbf{b}_m) \in V_{a_{\bf S}}^2$, $m=1,2,\dots,M$. Furthermore, the convergence of the series in the Hilbert-Schmidt norm is unconditional since 
$\{\alpha_\lambda (H_m)\}_{\lambda \in \Lambda;\, m=1,2,\dots,M}$ is a frame for $V_{\bf S}^2$.

\medskip

The above result can be summarized as follows:
\begin{dfn}
\label{def2}
A generalized stable sampling procedure in $V_{\bf S}^2$ is a map $\mathcal{S}_{\text{samp}}:V_{\bf S}^2 \rightarrow \ell^2_{_M}(\Lambda)$ defined as
\[
T=\sum_{m=1}^M\sum_{\lambda \in \Lambda} c(\lambda)\, \alpha_\lambda(S)\in V_{\bf S}^2 \longmapsto {\bf s_T}:=A \ast_\Lambda \mathbf{c} \in  \ell^2_{_M}(\Lambda)\,,
\]
where the matrix $A=[a_{m,n}] \in \mathcal{M}_{_{M\times N}}\big(\ell^2(\Lambda)\big)$ satisfies the conditions:
\begin{equation}
\label{framecondition}
0<\alpha_A:=\einf_{\xi\in \widehat{\Lambda}} \lambda_{\text{min}}[\,\widehat{A}(\xi)^*\widehat{A}(\xi)] \le \beta_A:= \esup_{\xi\in \widehat{\Lambda}} \lambda_{\text{max}}[\widehat{A}(\xi)^*\widehat{A}(\xi)]<+\infty\,.
\end{equation}
\end{dfn}
Associated with a generalized stable sampling procedure $\mathcal{S}_{\text{samp}}$ in $V_{\bf S}^2$  we obtain the following sampling result:
\begin{teo}
\label{tsamp2}
Assume that a generalized stable sampling procedure $\mathcal{S}_{\text{samp}}$  with associated matrix $A$  is given in $V_{\bf S}^2$ as in Definition \ref{def2}. Then, there exist $M\geq N$ elements $H_m\in V_{\bf S}^2$, $m=1,2,\dots,M$, such that the sampling formula 
\begin{equation}
\label{fsamp2}
T=\sum_{m=1}^M\sum_{\lambda \in \Lambda} s_{_{T,m}}(\lambda)\, \alpha_\lambda(H_m) \quad \text{in $\mathcal{H}\mathcal{S}$-norm}
\end{equation}
holds for each $T\in V_{\bf S}^2$ where $\{\alpha_\lambda (H_m)\}_{\lambda \in \Lambda;\, m=1,2,\dots,M}$ is a frame for $V_{\bf S}^2$. The convergence of the series is unconditional in Hilbert-Schmidt norm.

Reciprocally, if a sampling formula like \eqref{fsamp2} holds in $V_{\bf S}^2$ where 
\[
{\bf s_T}(\lambda)=\big(s_{_{T,1}}(\lambda), s_{_{T,2}}(\lambda), \dots, s_{_{T,M}}(\lambda)\big)^\top:=\big(A\ast_\Lambda \mathbf{c}\big)(\lambda)\,, \quad
\lambda \in \Lambda\,, 
\]
with $\beta_A<+\infty$, and $\{\alpha_\lambda (H_m)\}_{\lambda \in \Lambda;\, m=1,2,\dots,M}$ is a frame for $V_{\bf S}^2$, then the left-hand condition in \eqref{framecondition} also holds.
\end{teo}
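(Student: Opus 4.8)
The plan is to transfer the whole statement, via the isomorphism $\mathcal{T}_{\bf S}$ of \eqref{isos}, to the Hilbert space $\ell^2_{_N}(\Lambda)$, where the facts on bounded discrete convolution systems and frames of translates listed above apply verbatim; the direct implication is essentially the discussion preceding the theorem, exactly as in the proof of Theorem \ref{tsamp1}. First I would recall from that discussion that the $m$-th sample is $s_{_{T,m}}(\lambda)=\big\langle\mathbf{c},T_\lambda\mathbf{a}^*_m\big\rangle_{\ell^2_{_N}(\Lambda)}$, so the sampling map $\mathcal{A}$ is the analysis operator of $\big\{T_\lambda\mathbf{a}^*_m\big\}_{\lambda\in\Lambda;\,m=1,\dots,M}$, and condition \eqref{framecondition} is, by item (ii), precisely the statement that this system is a frame for $\ell^2_{_N}(\Lambda)$; in particular $\alpha_A>0$ forces the $N\times N$ matrix $\widehat{A}(\xi)^*\widehat{A}(\xi)$ to be invertible a.e., hence $M\ge N$.

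For the direct part I would then pick a dual frame of the same structured form $\big\{T_\lambda\mathbf{b}_m\big\}$ --- which exists by item (iii), taking $\widehat{B}(\xi)=\widehat{A}(\xi)^{\dag}$, an element of $\mathcal{M}_{_{N\times M}}(L^\infty(\widehat{\Lambda}))$ because $\alpha_A>0$ --- so that the frame expansion \eqref{cexpan} gives $\mathbf{c}=\sum_{m=1}^M\sum_{\lambda\in\Lambda}s_{_{T,m}}(\lambda)\,T_\lambda\mathbf{b}_m$ in $\ell^2_{_N}(\Lambda)$. Applying $\mathcal{T}_{\bf S}$ and using that it intertwines translations, i.e. $\mathcal{T}_{a_{\bf S}}(T_\lambda\mathbf{b}_m)=T_\lambda\big(\mathcal{T}_{a_{\bf S}}\mathbf{b}_m\big)$ together with $L_{T_z f}=\alpha_z(L_f)$, I obtain $T=\sum_{m=1}^M\sum_{\lambda\in\Lambda}s_{_{T,m}}(\lambda)\,\alpha_\lambda(H_m)$ with $H_m=L_{h_m}\in V_{\bf S}^2$, $h_m=\mathcal{T}_{a_{\bf S}}\mathbf{b}_m$; since $\mathcal{T}_{\bf S}$ is an isomorphism carrying the frame $\big\{T_\lambda\mathbf{b}_m\big\}$ to $\big\{\alpha_\lambda(H_m)\big\}$, the latter is a frame for $V_{\bf S}^2$ and the convergence of the series is unconditional.

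For the converse I would assume \eqref{fsamp2} with $\beta_A<+\infty$ and $\big\{\alpha_\lambda(H_m)\big\}_{\lambda\in\Lambda;\,m=1,\dots,M}$ a frame for $V_{\bf S}^2$, set $\mathbf{h}_m:=\mathcal{T}_{\bf S}^{-1}(H_m)\in\ell^2_{_N}(\Lambda)$, and apply $\mathcal{T}_{\bf S}^{-1}$ to \eqref{fsamp2}; since $\mathcal{T}_{\bf S}$ is an isomorphism intertwining $T_\lambda$ and $\alpha_\lambda$, the system $\big\{T_\lambda\mathbf{h}_m\big\}$ is a frame for $\ell^2_{_N}(\Lambda)$ and the reconstruction identity $\mathbf{c}=\sum_{m=1}^M\sum_{\lambda\in\Lambda}\big\langle\mathbf{c},T_\lambda\mathbf{a}^*_m\big\rangle_{\ell^2_{_N}(\Lambda)}\,T_\lambda\mathbf{h}_m$ holds for every $\mathbf{c}\in\ell^2_{_N}(\Lambda)$. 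Now $\big\{T_\lambda\mathbf{a}^*_m\big\}$ is Bessel because $\beta_A<+\infty$ (item (i)), so by the standard fact that two Bessel sequences obeying such a reconstruction identity form a pair of dual frames (see \cite{ole:16}), $\big\{T_\lambda\mathbf{a}^*_m\big\}$ is a frame for $\ell^2_{_N}(\Lambda)$, which by item (ii) is equivalent to $\alpha_A>0$, the left-hand inequality in \eqref{framecondition}. I expect no real obstacle here; the only points requiring care are that in the direct part the dual frame can be taken translation-invariant of the form $\{T_\lambda\mathbf{b}_m\}$ (this is what the Moore--Penrose pseudo-inverse in item (iii) provides) and that in the converse one must invoke the Bessel-plus-reconstruction criterion for dual frames rather than a bare reconstruction identity --- everything else being a formal consequence of $\mathcal{T}_{\bf S}$ being a translation-intertwining isomorphism.
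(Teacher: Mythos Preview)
Your proposal is correct and follows essentially the same approach as the paper: transfer everything to $\ell^2_{_N}(\Lambda)$ via the isomorphism $\mathcal{T}_{\bf S}$, use items (i)--(iii) to identify condition \eqref{framecondition} with the frame property of $\{T_\lambda\mathbf{a}^*_m\}$, build the $H_m$ from a structured dual frame (Moore--Penrose pseudo-inverse), and for the converse invoke the Bessel-plus-reconstruction criterion for dual frames (the paper cites this as \cite[Lemma 6.3.2]{ole:16}). The only cosmetic difference is that the paper writes $\mathcal{T}_{\bf S}^{-1}[\alpha_\lambda(H_m)]$ directly rather than first introducing $\mathbf{h}_m=\mathcal{T}_{\bf S}^{-1}(H_m)$ and then using the intertwining to get $T_\lambda\mathbf{h}_m$; both are equivalent.
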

\begin{proof}
The first part of the theorem has been proved above. Observe that the operators $H_m\in V_{\bf S}^2$, $m=1,2,\dots,M$, depend on the dual frames 
$\big\{T_\lambda\,\mathbf{b}_{m}\big\}_{\lambda \in \Lambda;\, m=1,2,\dots,M}$ of the frame $\big\{T_{\lambda}\,\mathbf{a}^*_{m}\big\}_{\lambda \in \Lambda;\, m=1,2,\dots,M}$. Namely, $H_m=L_{h_m} \in V_{\bf S}^2$ with Weyl symbol $h_m=\mathcal{T}_{a_{\bf S}}(\mathbf{b}_m) \in V_{a_{\bf S}}^2$, $m=1,2,\dots,M$; 
$\mathbf{b}_m$ denotes the $m$-th  column of the matrix $B$ with transfer matrix $\widehat{B}$.
There are infinite dual frames  whenever $M>N$; they are obtained  from the left-inverses $\widehat{B}(\xi)$ of $\widehat{A}(\xi)$, i.e.,
$\widehat{B}(\xi)\,\widehat{A}(\xi)=I_{_N}$,\,\, a.e. $\xi \in \widehat{\Lambda}$, which are obtained, from the Moore-Penrose pseudo-inverse $\widehat{A}(\xi)^\dag$, by means of the $N\times M$ matrices 
\[
\widehat{B}(\xi):=\widehat{A}(\xi)^\dag+C(\xi)\big[I_M-\widehat{A}(\xi)\widehat{A}(\xi)^\dag\big]\,, \text{\, \,a.e. $\xi \in \widehat{\Lambda}$}\,, 
\]
where $C$ denotes any $N\times M$ matrix with entries in $L^\infty(\widehat{\Lambda})$.

For the second part, we have that $\{\mathcal{T}_{\bf S}^{-1}[\alpha_\lambda(H_m)]\}_{\lambda \in \Lambda;\, m=1,2,\dots,M}$ is a frame for $\ell^2_{_N}(\Lambda)$ and
\[
\mathbf{c}=\sum_{m=1}^M\sum_{\lambda \in \Lambda} s_{_{T,m}}(\lambda)\, \mathcal{T}_{\bf S}^{-1}[\alpha_\lambda(H_m)]=\sum_{m=1}^M\sum_{\lambda \in \Lambda} \,\big\langle \mathbf{c}, T_\lambda \mathbf{a}^*_{m} \big \rangle_{\ell^2_{_N}(\Lambda)} \mathcal{T}_{\bf S}^{-1}[\alpha_\lambda(H_m)]\quad \text{in $\ell^2_{_N}(\Lambda)$}\,.
\]
Since $\beta_A<+\infty$, the sequence $\big\{T_{\lambda}\,\mathbf{a}^*_{m}\big\}_{\lambda \in \Lambda;\, m=1,2,\dots,M}$ is a Bessel sequence for $\ell^2_{_N}(\Lambda)$, and consequently (see Ref.~\cite[Lemma 6.3.2]{ole:16}), a dual frame of
$\{\mathcal{T}_{\bf S}^{-1}[\alpha_\lambda(H_m)]\}_{\lambda \in \Lambda;\, m=1,2,\dots,M}$; hence, $\alpha_A>0$.
\end{proof}
Notice that in Theorem \ref{tsamp2} necessarily $M\geq N$; in case $M=N$ more can be said:
\begin{cor}
In case $M=N$, the following statements are equivalent:
\begin{enumerate}
\item 
\[
0<\einf_{\xi\in \widehat{\Lambda}} \big|\det [\widehat{A}(\xi)]\big| \le \esup_{\xi\in \widehat{\Lambda}} \big|\det [\widehat{A}(\xi)]\big|<+\infty
\] 
\item There exist $N$ unique elements $H_n$, $n=1,2,\dots, N$, in $V_{\bf S}^2$ such that the associated sequence
$\big\{\alpha_\lambda(H_{n})\big\}_{\lambda\in \Lambda;\,n=1,2,\dots, N}$ is a Riesz basis for $V_{\bf S}^2$ and the sampling formula
\[
T=\sum_{n=1}^N\sum_{\lambda \in \Lambda} s_{_{T,n}}(\lambda)\, \alpha_\lambda(H_n)\ \quad \text{ in $\mathcal{H}\mathcal{S}$}
\]
holds for each $T\in V_{\bf S}^2$.
\end{enumerate}
Moreover, the interpolation property $s_{_{H_n,n'}}(\lambda)=\delta_{n,n'}\delta_{\lambda,0}$, where $\lambda \in \Lambda $ and $n,n'=1,2,\dots,N$, holds.
\end{cor}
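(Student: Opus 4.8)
The plan is to transfer the statement, via the isomorphism $\mathcal{T}_{\bf S}$ of \eqref{isos}, to an equivalent statement about the convolution system $\mathcal{A}:\ell^2_{_N}(\Lambda)\to\ell^2_{_N}(\Lambda)$ in the square case $M=N$, and then invoke the already-recorded facts about convolution systems. First I would observe that, since $\mathcal{T}_{\bf S}$ is an isomorphism carrying $T_\lambda$ to $\alpha_\lambda$ and (by the computation preceding Definition \ref{def2}) carrying the samples of $T$ to the coefficients $\mathbf{c}$ of $T$, the sampling formula $T=\sum_{n,\lambda} s_{_{T,n}}(\lambda)\,\alpha_\lambda(H_n)$ with $\{\alpha_\lambda(H_n)\}$ a Riesz basis for $V_{\bf S}^2$ is equivalent to the expansion $\mathbf{c}=\sum_{n,\lambda}\langle\mathbf{c},T_\lambda\mathbf{a}^*_n\rangle\, T_\lambda\mathbf{b}_n$ in $\ell^2_{_N}(\Lambda)$ with $\{T_\lambda\mathbf{b}_n\}$ a Riesz basis. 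Here $\mathbf{b}_n$ is the $n$-th column of the matrix $B$ attached to $H_n$ via $h_n=\mathcal{T}_{a_{\bf S}}(\mathbf{b}_n)$, and $\{T_\lambda\mathbf{b}_n\}$ must be the dual Riesz basis of $\{T_\lambda\mathbf{a}^*_n\}$ because the reproducing property forces $\widehat{B}(\xi)\widehat{A}(\xi)=I_N$.

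For the implication (1)$\Rightarrow$(2): assuming $0<\einf|\det\widehat{A}|\le\esup|\det\widehat{A}|<\infty$, item 4 in the list of properties of discrete convolution systems gives that $\mathcal{A}$ is an isomorphism $\ell^2_{_N}(\Lambda)\to\ell^2_{_N}(\Lambda)$, and by item (v) the sequence $\{T_\lambda\mathbf{a}^*_n\}_{\lambda\in\Lambda;\,n=1,\dots,N}$ is a Riesz basis for $\ell^2_{_N}(\Lambda)$ whose unique dual Riesz basis $\{T_\lambda\mathbf{b}_n\}$ is read off from the columns of $\widehat{A}(\xi)^{-1}$ (note $\esup|\det\widehat{A}|<\infty$ is needed, together with $\beta_A<\infty$, to ensure $\widehat{A}(\xi)^{-1}$ has entries in $L^\infty(\widehat{\Lambda})$, so that $\mathbf{b}_n\in\ell^2(\Lambda)$). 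Applying $\mathcal{T}_{\bf S}$ to the Riesz basis expansion of $\mathbf{c}$ produces the claimed sampling formula with $H_n=L_{h_n}$, $h_n=\mathcal{T}_{a_{\bf S}}(\mathbf{b}_n)$; uniqueness of the $H_n$ follows from uniqueness of the dual Riesz basis together with injectivity of $\mathcal{T}_{\bf S}$. For (2)$\Rightarrow$(1): a sampling formula with $\{\alpha_\lambda(H_n)\}$ a Riesz basis pulls back under $\mathcal{T}_{\bf S}^{-1}$ to a Riesz-basis expansion in $\ell^2_{_N}(\Lambda)$; as in the proof of Theorem \ref{tsamp2}, $\{\mathcal{T}_{\bf S}^{-1}[\alpha_\lambda(H_n)]\}$ is a Riesz basis and, because $\beta_A<\infty$ makes $\{T_\lambda\mathbf{a}^*_n\}$ Bessel, it must be the dual Riesz basis of $\{T_\lambda\mathbf{a}^*_n\}$; in particular $\{T_\lambda\mathbf{a}^*_n\}$ is itself a Riesz basis, hence $\mathcal{A}$ is an isomorphism and item 4 yields $\einf|\det\widehat{A}|>0$, while $\esup|\det\widehat{A}|<\infty$ is immediate from $\beta_A<\infty$ via the bound $|\det\widehat{A}(\xi)|^2=\det[\widehat{A}(\xi)^*\widehat{A}(\xi)]\le\lambda_{\text{max}}[\widehat{A}(\xi)^*\widehat{A}(\xi)]^N$.

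Finally, for the interpolation property: since $\{T_\lambda\mathbf{a}^*_n\}$ and $\{T_\lambda\mathbf{b}_n\}$ are dual Riesz bases, biorthogonality gives $\langle T_\mu\mathbf{b}_{n'}, T_\lambda\mathbf{a}^*_n\rangle_{\ell^2_{_N}(\Lambda)}=\delta_{n,n'}\delta_{\lambda,\mu}$. Taking $\mu=0$ and unravelling the definitions: $T_0\mathbf{b}_{n'}=\mathbf{b}_{n'}=\mathcal{T}_{\bf S}^{-1}(H_{n'})$ is the coefficient sequence of $H_{n'}$, and $\langle\,\cdot\,, T_\lambda\mathbf{a}^*_n\rangle_{\ell^2_{_N}(\Lambda)}$ is precisely the map producing the $n$-th sample at $\lambda$; hence $s_{_{H_{n'},n}}(\lambda)=\delta_{n,n'}\delta_{\lambda,0}$, i.e., $s_{_{H_n,n'}}(\lambda)=\delta_{n,n'}\delta_{\lambda,0}$ after renaming. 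The main obstacle I anticipate is bookkeeping rather than conceptual: one must be careful that $\esup|\det\widehat{A}|<\infty$ (not merely $\beta_A<\infty$) is what guarantees $\widehat{A}(\xi)^{-1}\in\mathcal{M}_{_{N\times N}}(L^\infty(\widehat{\Lambda}))$ and hence that the dual objects $\mathbf{b}_n$ genuinely lie in $\ell^2(\Lambda)$, so that $H_n\in\mathcal{H}\mathcal{S}(\mathbb{R}^d)$; and one must keep straight which translation operator ($\ell^2$, $L^2(\mathbb{R}^{2d})$, or $\alpha_\lambda$ on $\mathcal{H}\mathcal{S}$) is acting at each stage, all of which is handled by the intertwining relations $\mathcal{T}_{a_{\bf S}}\circ T_\lambda=T_\lambda\circ\mathcal{T}_{a_{\bf S}}$ and $L_{T_\lambda f}=\alpha_\lambda(L_f)$ already established in the excerpt.
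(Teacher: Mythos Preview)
Your proposal is correct and follows essentially the same route as the paper's proof, which simply observes that in the square case statement 1 is equivalent to the frame condition $0<\alpha_A\le\beta_A<\infty$, invokes Theorem~\ref{tsamp2}, and uses uniqueness of the dual of a Riesz basis (your argument unpacks these steps directly at the $\ell^2_{_N}(\Lambda)$ level rather than citing the theorem as a black box). Two minor slips: you refer to ``item (v)'' where the list only runs (i)--(iv), and in your closing remark it is $\einf|\det\widehat{A}|>0$ together with $\beta_A<\infty$ (bounding the cofactors) that forces $\widehat{A}(\xi)^{-1}\in\mathcal{M}_{_{N\times N}}(L^\infty(\widehat{\Lambda}))$, not $\esup|\det\widehat{A}|<\infty$.
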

\begin{proof}
In this case, the square matrix $\widehat{A}(\xi)$ is invertible and statement $1.$  is equivalent to $0<\alpha_A \le \beta_A <+\infty$; besides,  
any Riesz basis has a unique dual basis. The uniqueness of the coefficients in a Riesz basis expansion gives the interpolation property.
\end{proof}

\bigskip

\noindent{\bf Acknowledgments:}
The author thanks {\em Universidad Carlos III de Madrid} for granting him a sabbatical year in 2020-21.
This work has been supported by the grant MTM2017-84098-P from the Spanish {\em Ministerio de Econom\'{\i}a y Competitividad (MINECO)}.

%%%%%%%%%%%%%%%%%%%%%%%%%%%%%%%%%%%%%%%%%%%%%%%%%%%%%%%%%%%%%%%%%%%%%

%%%%%%%%%%%%%%%%%%%%%%%%%
\end{document}